\numberwithin{equation}{section}
\newcommand{\calB}{\mathcal{B}}
\newcommand{\calM}{\mathcal{M}}
\newcommand{\calV}{\mathcal{V}}
\newcommand{\mB}{\mathbb{B}}
\newcommand{\mC}{\mathbb{C}}
\newcommand{\mD}{\mathbb{D}}
\newcommand{\mN}{\mathbb{N}}
\newcommand{\mT}{\mathbb{T}}
\newcommand{\mZ}{\mathbb{Z}}
\newcommand{\sH}{{\mathcal{M} ({\textrm{H}}^2_n)}}
\newcommand{\sHH}{{\mathcal{M} ({\textrm{\em H}}^2_n)}}
\newtheorem{theorem}{Theorem}[section]
\newtheorem{lemma}[theorem]{Lemma}
\theoremstyle{definition}
\theoremstyle{definition}
\newtheorem{definition}[theorem]{Definition}
\theoremstyle{definition}
\begin{document}

\keywords{coherent ring, Drury-Arveson space, multiplier algebra}

\subjclass{Primary 16S15; Secondary 46E22, 47B32, 46J15, 13J99}

\title[Noncoherence of $\sH$]{Noncoherence of the multiplier algebra 
of the Drury-Arveson space ${\textrm{H}}^2_n$ for $n\geq 3$}

\author[A. Sasane]{Amol Sasane}

\address{Department of Mathematics \\London School of Economics\\
     Houghton Street\\ London WC2A 2AE\\ United Kingdom}

\email{sasane@lse.ac.uk}

\begin{abstract}
Let ${\textrm{H}}_{n}^2$ denote the Drury-Arveson Hilbert space on the unit ball $\mB_n$ in $\mC^n$, 
and let $\calM({\textrm{H}}_{n}^2)$ be its multiplier algebra. We show that for $n\geq 3$, the ring $\calM({\textrm{H}}_{n}^2)$ is 
not coherent. 
\end{abstract}

\maketitle

\section{Introduction}

\noindent The aim of this article is to investigate a certain algebraic property of rings, 
called {\em coherence}, which is a generalization of the property of being Noetherian, 
for a particular algebra of holomorphic functions in the unit ball in $\mC^n$. 

\begin{definition}[Coherent ring] 
Let $R$ be a unital commutative ring, and for an $n\in \mN:=\{1,2,3,\cdots\}$, 
let $R^n=R\times \cdots \times R$ ($n$ times). If 
$f\in R^n$, say $f=(f_1,\cdots, f_n)$, then a 
{\em relation} $g$ on $f$, written $g\in f^\perp$, is an $n$-tuple 
$g=(g_1,\cdots, g_n)\in R^n$ such that 
 $
g_1 f_1+\cdots + g_n f_n=0.
$ 
The ring $R$ is said to be {\em coherent} if for each $n$ and each 
$f\in R^n$, the $R$-module $f^\perp$ is finitely generated. 
\end{definition}

\noindent A property which is equivalent to coherence  
is that the intersection of any two finitely generated 
ideals in $R$ is finitely generated, and the annihilator 
of any element is finitely generated \cite{Cha}. 
We refer the reader to the monograph \cite{GlaB} for the relevance 
of the property of coherence in homological  algebra. All Noetherian rings 
are coherent, but not all coherent rings are Noetherian. For example, the polynomial ring $\mC[x_1,x_2,x_3,\cdots]$ 
is not Noetherian (because the sequence of ideals 
$
\langle x_1 \rangle\subset \langle x_1, x_2 \rangle\subset \langle x_1, x_2, x_3 \rangle \subset \cdots 
$
is ascending and not stationary), but $\mC[x_1, x_2, x_3,\cdots]$ is coherent \cite[Corollary~2.3.4]{GlaB}. 
 
For algebras of holomorphic functions in the unit disc 
$$
\mD:=\{z\in \mC: |z|<1\}
$$ 
in $\mC$, it is known 
that the Hardy algebra ${\textrm{H}}^\infty(\mD)$, consisting of all bounded and holomorphic 
functions on $\mD$ with pointwise operations, is coherent, 
while the disc algebra $\textrm{A}(\mD)$ (of all functions in $\textrm{H}^\infty(\mD)$ that admit a continuous extension to the 
closure of $\mD$ in $\mC$) is not coherent \cite{McVRub}. 
For $n\geq 3$, Amar \cite{Ama} showed that the 
Hardy algebra ${\textrm{H}}^\infty (\mB_n)$, consisting of all bounded and holomorphic functions in the unit ball 
$$
\mB_n:=\{z=(z_1,\cdots, z_n)\in \mC^n: |z_1|^2+\cdots+|z_n|^2<1\},
$$ 
is not coherent. Related results about some 
other subalgebras of holomorphic functions in the ball 
and the polydisc were also obtained in \cite{Ama}. Whether or not  
the Hardy algebra ${\textrm{H}}^\infty(\mD^2)$ (of the bidisc $\mD^2$) and  
${\textrm{H}}^\infty (\mB_2)$ 
 are coherent does not seem to be known. 
 
The aim of this article is to prove  the noncoherence of the 
multiplier algebra of the Drury-Arveson space in 
$\mC^n$ with $n\geq 3$, and our main result is the following. 

\begin{theorem}
\label{main_theorem}
 For $n\geq 3$, $\sHH$ is not coherent.
\end{theorem}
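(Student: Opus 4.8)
The plan is to argue from the definition of coherence together with the equivalent formulation recalled just after it. First I would note that $\sH$ is an integral domain: its elements are holomorphic on the connected domain $\mB_n$, so a product vanishes only if a factor does. Hence the annihilator of every nonzero element is $\{0\}$, which is finitely generated, and the annihilator condition in the cited characterization is automatic. Thus to prove non-coherence it suffices to exhibit a \emph{single} pair $f=(f_1,f_2)\in\sH\times\sH$ whose relation module $f^\perp$ is not finitely generated (equivalently, two principal ideals $\langle f_1\rangle$, $\langle f_2\rangle$ whose intersection is not finitely generated, since $(g_1,g_2)\mapsto g_1f_1$ gives an $\sH$-module isomorphism $f^\perp\cong\langle f_1\rangle\cap\langle f_2\rangle$).

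The next step is to put $f^\perp$ into a transparent form. For a pair, $(g_1,g_2)\in f^\perp$ means $g_1f_1=-g_2f_2$; writing $h:=g_1/f_2=-g_2/f_1$ identifies $f^\perp$, as an $\sH$-module, with
\[ M:=\{h\ \text{meromorphic on}\ \mB_n:\ hf_1\in\sH\ \text{and}\ hf_2\in\sH\}, \]
the isomorphism being $h\mapsto(hf_2,-hf_1)$. So $f^\perp$ is finitely generated if and only if $M$ is. There is no loss in taking $f_1,f_2$ to have no common factor, since such a factor cancels in every relation and leaves $M$ unchanged. For a coprime pair, any pole of $h\in M$ must lie in $Z(f_1)\cap Z(f_2)$, a set of codimension at least two (indeed we shall arrange it empty in $\mB_n$), so by the Riemann extension theorem every $h\in M$ is holomorphic. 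Thus $M$ consists of holomorphic $h$ with $hf_1,hf_2\in\sH$; it always contains $\sH$ (the element $h=1$ corresponding to the Koszul relation $(f_2,-f_1)$), with equality precisely when a division property holds. The target is therefore to choose $f_1,f_2$ so that $M$ is not merely larger than $\sH$ but fails to be finitely generated — a phenomenon forced by the boundary behaviour of $f_1,f_2$, namely the absence of a \emph{uniform} corona lower bound $|f_1|+|f_2|\geq\delta>0$ despite $f_1,f_2$ having no common zero in $\mB_n$.

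The construction I would use exploits that $n\geq3$ leaves room for exactly this. Following the geometry of Amar's example for $\mathrm{H}^\infty(\mB_n)$ \cite{Ama}, I would take coprime $f_1,f_2$ whose joint modulus $|f_1|+|f_2|$ decays to $0$ along a sequence of regions $\Omega_1,\Omega_2,\dots$ marching out to $\partial\mB_n$ along a positive-dimensional piece of the boundary that is available only for $n\geq3$. One then manufactures an explicit sequence $h_1,h_2,\dots\in M$, with $h_k$ essentially localized near $\Omega_k$, and shows that no finite subfamily generates $M$ over $\sH$: a hypothetical identity $h_k=\sum_j c_jh_{k_j}$ with $c_j\in\sH$, tested on $\Omega_k$ for large $k$, would force a uniform size bound on the $h_k$ that the construction is designed to violate.

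The main obstacle, and the point at which $\sH$ genuinely differs from $\mathrm{H}^\infty(\mB_n)$, is quantitative, and has two parts. First, one must certify that the auxiliary functions are honest \emph{multipliers} and not merely bounded holomorphic functions; here I would invoke a convenient sufficient condition for membership in $\sH$ (a Carleson/kernel criterion, or the complete Pick interpolation theorem for $\sH$, to build the relevant products $h_kf_i$ with controlled multiplier norm). Second, the non-generation step must be made rigorous through estimates on the reproducing kernel of $\mathrm{H}^2_n$ and the geometry of the $\Omega_k$, confirming that finitely many candidate generators cannot conspire across the accumulating regions to reproduce $h_k$ inside $\sH$. Establishing this final inequality — simultaneously staying inside the multiplier algebra while defeating every finite generating set — is the crux of the argument; the reduction to $n=3$ (or a uniform treatment for all $n\geq3$) via coordinate subballs of $\mB_n$ is then a routine matter.
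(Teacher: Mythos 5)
Your general frame (reduce to a single pair $f=(f_1,f_2)$, identify $f^\perp$ with the module $M=\{h:\ hf_1,\,hf_2\in\sH\}$) is sound and consistent with how the paper's proof begins, but the proposal stops exactly where the real proof has to start, and the one concrete geometric decision you do make points in the wrong direction. The paper (following Amar \cite{Ama}) takes $f_1=z_1$, $f_2=z_2$, whose common zero set is \emph{not} empty: it is the variety $\calV=\{z\in\mB_n: z_1=z_2=0\}$, a ball of dimension $n-2\geq 1$, and it is precisely this positive-dimensional variety meeting $\partial\mB_n$ that drives the contradiction. The relations used are the continuum family $g_\alpha=(z_2\varepsilon_\alpha,\,-z_1\varepsilon_\alpha)$, $\alpha\in\mT$, with $\varepsilon_\alpha=(1-\alpha z_3^2)^{-1/4}$; assuming $f^\perp$ has arbitrary generators $h_1,\dots,h_k$, one expresses each $g_\alpha$ in terms of them, restricts to $\calV$, and runs a determinant/rank argument over $k+1$ distinct values of $\alpha$: a nonvanishing determinant forces the bounded right-hand side of \eqref{eq_Amar_3} to equal a left-hand side that blows up as $z_3^2\to\overline{\alpha_*}$, and the degenerate case is eliminated by passing to a maximal nonvanishing minor. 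By insisting that $Z(f_1)\cap Z(f_2)$ be empty (a corona-type pair) you discard the very mechanism that makes this work, and you offer no replacement: no explicit $f_1,f_2$, no explicit elements of $M$, and no executed estimate. Likewise, membership of the auxiliary functions in $\sH$ is not something one can simply ``invoke'': it is the paper's Lemma~\ref{norm_multiplier}, proved via the Fang--Xia orthogonal decomposition of ${\textrm{H}}_n^2$ into weighted Bergman spaces over the disc together with a Carleson-measure estimate, and it is the main technical point beyond Amar's $\textrm{H}^\infty$ argument.

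There is also a logical gap in your non-generation step as stated. Refuting identities $h_k=\sum_j c_j h_{k_j}$ only shows that no finite subfamily of your constructed sequence $\{h_k\}$ generates $M$; it does not show that $M$ is not finitely generated unless you additionally prove that $\{h_k\}$ generates $M$, which you do not address. The paper avoids this by starting from an \emph{arbitrary} hypothetical finite generating set of $f^\perp$ and deriving the contradiction from the fact that finitely many functions $t_i$ restricted to $\calV$ cannot account for the continuum of distinct boundary singularities of the $\varepsilon_\alpha$'s. Finally, your closing claim that the case $n>3$ reduces ``routinely'' to $n=3$ via coordinate subballs is unsupported: coherence does not pass between an algebra and its restriction algebras in any obvious way, and the paper instead runs the same argument uniformly for every $n\geq 3$.
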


\noindent We give the pertinent definitions and notation below. 

A multivariable analogue of the classical Hardy space 
on $\mD$ in $\mC$ is the Drury-Arveson space ${\textrm{H}}_{n}^2$ 
on the unit ball $\mB_n$ in $\mC^n$ \cite{Arv}, \cite{Dru}. The space 
${\textrm{H}}_{n}^2$ is a Hilbert function space that has a natural 
$n$-tuple of operators acting on it, giving it the 
structure of a Hilbert module, and has been the object 
of intensive study in the last decade or so owing 
to its relation to multivariable operator theory 
(for example the von Neumann inequality for commuting 
row contractions \cite{Dru}) and multivariable function 
theory (for instance Nevanlinna-Pick interpolation \cite{BalBol}). 

\begin{definition}[The Drury-Arveson space ${\textrm{H}}_{n}^2$] 
The {\em Drury-Arveson space} ${\textrm{H}}_{n}^2$ is a reproducing 
kernel Hilbert space of holomorphic functions on $\mB_n$ 
with the kernel 
$$
K(z,w)=\frac{1}{1-\langle z,w\rangle}, \quad z,w\in \mB_n.
$$  
\end{definition}
 
\noindent We will use the standard multi-index notation: 
For $\alpha=(\alpha_1,\cdots, \alpha_n)\in \mZ_+^n$, 
where $\mZ_+:=\{0,1,2,3,\cdots\}$, 
$$
\alpha!:=\alpha_1 ! \; \alpha_2!\cdots \alpha_n!, \quad |\alpha|:=\alpha_1+\cdots +\alpha_n, \quad 
\zeta^\alpha:= \zeta_1^{\alpha_1} \cdots \zeta_n^{\alpha_n}.
$$
 
\begin{definition}[The multiplier algebra $\calM({\textrm{H}}_{n}^2)$]
  A holomorphic function $f$ on $\mB_n$ is called a {\em multiplier}  for ${\textrm{H}}_{n}^2$ if $f\cdot {\textrm{H}}_{n}^2 \subset {\textrm{H}}_{n}^2$. 
  
   $\calM({\textrm{H}}_{n}^2)$ is the ring of all multipliers on ${\textrm{H}}_{n}^2$ with pointwise operations. 
 \end{definition}
 
\noindent  If $f$ is a multiplier, then the multiplication operator $M_f:{\textrm{H}}_{n}^2 \rightarrow {\textrm{H}}_{n}^2$ corresponding to $f$
 defined by 
 $$
 M_f (g):=fg, \quad g \in {\textrm{H}}_{n}^2,
 $$
is necessarily bounded on ${\textrm{H}}_{n}^2$ \cite{Arv}, and the multiplier norm of $f$ in $\calM({\textrm{H}}_{n}^2)$ is defined to be the 
operator norm of $M_f$. Then $\calM({\textrm{H}}_{n}^2)$ is a strict sub-algebra of 
${\textrm{H}}^\infty(\mB_n)$ if $n\geq 2$ \cite{Arv}. 
If $n=1$, then ${\textrm{H}}_{n}^2={\textrm{H}}_1^2$ is the usual Hardy space of the disc, 
and $\calM({\textrm{H}}_{n}^2)={\textrm{H}}^\infty(\mD)$, the Hardy algebra on 
the disc $\mD$.

The proof of our main result, Theorem~\ref{main_theorem}, 
is an adaption to the case of $\sH$ of the proof given 
in Amar \cite{Ama} for showing the noncoherence of 
${\textrm{H}}^\infty(\mD^n)$, $n\geq 3$.

\section{Preliminaries}

\noindent The following result is shown along the same lines as 
the calculation done in \cite[Lemma~2.3]{FanXia}, 
where it was shown that 
$$
\frac{z_2}{1-s z_1} \in \sH
$$
for all real $s\in (0,1)$. 

\begin{lemma}
\label{norm_multiplier}
Let $\alpha \in \mT:=\{z\in \mC: |z|=1\}$. 
The function $G_\alpha:\mB_n \rightarrow \mC$, given by 
$$
G_\alpha(z)=\frac{z_2}{(1-\alpha z_1^2)^{1/4}}, \quad z=(z_1,\cdots, z_n)\in \mB_n,
$$
belongs to $\calM(H^2_n)$.
\end{lemma}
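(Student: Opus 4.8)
The plan is to prove directly that the multiplication operator $M_{G_\alpha}$ is bounded on ${\textrm{H}}^2_n$, with a bound \emph{independent of} $\alpha\in\mT$ (the uniformity is what the noncoherence application will ultimately want). Writing $G_\alpha=z_2\,h(z_1)$ with $h(z_1)=(1-\alpha z_1^2)^{-1/4}$, I would first record the two standard facts about ${\textrm{H}}^2_n$ obtained by expanding the kernel $K(z,w)=(1-\langle z,w\rangle)^{-1}$ via the multinomial theorem: the monomials $\{z^\gamma\}$ are orthogonal with $\|z^\gamma\|^2=\gamma!/|\gamma|!$, and, grouping $g\in{\textrm{H}}^2_n$ by its multidegree $\beta'=(\beta_2,\dots,\beta_n)$ in $z_2,\dots,z_n$, one has the orthogonal decomposition $g=\sum_{\beta'}g_{\beta'}(z_1)\,z_2^{\beta_2}\cdots z_n^{\beta_n}$.

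The first step is a reduction to one variable. Since $G_\alpha=z_2\,h(z_1)$, multiplication by $G_\alpha$ carries the $\beta'$-summand into $(h g_{\beta'})(z_1)\,z_2^{\beta_2+1}z_3^{\beta_3}\cdots z_n^{\beta_n}$, i.e.\ into the column indexed by $\beta'+(1,0,\dots,0)$. As $\beta'\mapsto\beta'+(1,0,\dots,0)$ is injective and distinct columns are mutually orthogonal, the images stay orthogonal and
\[
\|M_{G_\alpha}g\|^2=\sum_{\beta'}\big\|(h g_{\beta'})\,z_2^{\beta_2+1}z_3^{\beta_3}\cdots z_n^{\beta_n}\big\|^2 .
\]
Evaluating each column norm by $\|z^\gamma\|^2=\gamma!/|\gamma|!$ collapses the problem to the single weighted one-variable estimate
\[
(\beta_2+1)\sum_m |(hp)_m|^2\frac{m!}{(m+s+1)!}\ \le\ C^2\sum_m |p_m|^2\frac{m!}{(m+s)!},\qquad s=|\beta'|\ge \beta_2,
\]
for every power series $p(z_1)=\sum_m p_m z_1^m$, with $C$ independent of $s,\beta_2,\alpha$. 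Because $\beta_2+1\le s+1$, it suffices to treat the worst case $\beta_2=s$.

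The second and analytically decisive step is to prove this weighted inequality, and here I would pass to a Bergman-type integral on the disc. The Beta-integral identity $m!/(m+s)!=\frac1{(s-1)!}\int_0^1 t^m(1-t)^{s-1}\,dt$ together with $\sum_m|F_m|^2 t^m=\frac1{2\pi}\int_0^{2\pi}|F(\sqrt t\,e^{i\theta})|^2\,d\theta$ recasts the two sides, up to the harmless factor $\tfrac{s+1}{s}\le 2$, as
\[
\int_{\mD}|h(z_1)|^2|p(z_1)|^2(1-|z_1|^2)^{s}\,dA \quad\text{versus}\quad \int_{\mD}|p(z_1)|^2(1-|z_1|^2)^{s-1}\,dA .
\]
For $s\ge 1$ the estimate then follows from the pointwise bound $|h(z_1)|^2(1-|z_1|^2)\le 1$ on $\mD$: since $|1-\alpha z_1^2|\ge 1-|z_1|^2$ (as $|\alpha|=1$), one gets $|h(z_1)|^2=|1-\alpha z_1^2|^{-1/2}\le(1-|z_1|^2)^{-1/2}\le(1-|z_1|^2)^{-1}$, so $|h(z_1)|^2(1-|z_1|^2)^{s}\le(1-|z_1|^2)^{s-1}$, and integrating against $|p|^2\,dA$ finishes it. The main obstacle is the borderline case $s=0$ (Hardy into unweighted Bergman), which must be handled separately: there one is reduced to $\frac1\pi\int_{\mD}|p|^2(1-|z_1|^2)^{-1/2}\,dA\le C^2\|p\|_{{\textrm{H}}^2_n}^2$, i.e.\ to the finiteness (uniformly in $m$) of $\int_0^1 r^{2m+1}(1-r^2)^{-1/2}\,dr$. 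This is exactly where the exponent $\tfrac14$ earns its keep: it gives $|h|^2$ the \emph{subcritical} singularity $|1-\alpha z_1^2|^{-1/2}$ with exponent $\tfrac12<1$, so the $s=0$ integral converges, whereas exponent $\tfrac12$ for $h$ would make it diverge logarithmically; any choice in $(0,\tfrac12)$ would work equally well. Assembling the three steps yields $\|M_{G_\alpha}\|\le\sqrt2$ for every $\alpha\in\mT$, and in particular $G_\alpha\in\sH$.
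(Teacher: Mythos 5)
Your proof is correct, and its overall skeleton coincides with the paper's: the same decomposition into columns indexed by the multidegree in $(z_2,\dots,z_n)$ (the paper's subspaces $H_\beta$, taken from Fang--Xia), the same observation that multiplication by $z_2h(z_1)$ shifts columns injectively and hence acts orthogonally so that the operator norm is the supremum of the column-wise norms, and the same key pointwise bound $|1-\alpha z_1^2|\ge 1-|z_1|^2$ used to trade the singular factor for one power of the Bergman weight in the columns with $s=|\beta'|\ge 1$ (the paper's constant there is $\frac{\beta_2+1}{|\beta|}\le 2$, matching your $\frac{s+1}{s}\le 2$). You diverge in two supporting steps, one cosmetic and one substantive. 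Cosmetically, where the paper quotes the Fang--Xia unitaries $W_\beta$ identifying $H_\beta$ with the weighted Bergman spaces $\textrm{B}^{(|\beta|-1)}$, you rebuild the same weighted integrals by hand from the Beta identity $m!/(m+s)!=\frac{1}{(s-1)!}\int_0^1 t^m(1-t)^{s-1}\,dt$ plus Parseval; this is equivalent but self-contained. Substantively, for the Hardy column $s=0$ the paper proves a Carleson-box estimate for $|1-\zeta^2|^{-1/2}\,dA$ (its Lemma~\ref{lemma_Carleson}) and then invokes, implicitly, Carleson's embedding theorem to conclude $\int_{\mD}|p|^2\,|1-\zeta^2|^{-1/2}\,dA\le C\|p\|_{{\textrm{H}}_1^2}^2$; you instead majorize $|1-\alpha\zeta^2|^{-1/2}\le(1-|\zeta|^2)^{-1/2}$ and exploit that the majorant is radial, so orthogonality of $\{e^{im\theta}\}$ on circles reduces the embedding to the uniform boundedness of the moments $\int_0^1 r^{2m+1}(1-r^2)^{-1/2}\,dr\le 1$. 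Your route is more elementary (no Carleson machinery at all) and is in the same spirit as the paper's own computation, which also passes through $|1-\zeta^2|\ge 1-|\zeta|^2$ inside the box estimate; what the paper's route records is a statement of independent interest (the Carleson property of that measure), while yours buys a shorter, fully self-contained argument and the explicit uniform bound $\|M_{G_\alpha}\|\le\sqrt2$ for all $\alpha\in\mT$ (the paper gets independence of $\alpha$ by rotating to $\alpha=1$; as you correctly note, uniformity is not actually needed for the noncoherence application, only membership in $\sH$).
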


\noindent Before proving this result, we need some preliminaries from \cite[Section~2]{FanXia}, 
reproduced here for the convenience of the reader as they will play an essential role in 
the justification of Lemma~\ref{norm_multiplier}.
 Let 
 $$
 \calB:=\{(0,\beta_2,\cdots, \beta_n): \beta_2 ,\cdots, \beta_n\in \mZ_+\}\subset \mZ_+^n.
 $$
 We will denote as before, the components of 
 $z$ by $z_1, \cdots, z_n$. For each $\beta \in \calB$, 
 define the closed linear subspace 
 $$
 H_\beta=\overline{\textrm{span}\{z_1^k z^\beta: k\geq 0\}}
 $$
 of ${\textrm{H}}_{n}^2$. Then we have the orthogonal decomposition 
 $$
 {\textrm{H}}_{n}^2 =\displaystyle \bigoplus_{\beta \in \calB} H_\beta.
 $$
 For each $\beta \in \calB$, we have an orthonormal basis $\{e_{k,\beta}: k\geq 0\}$ for $H_\beta$, 
 where 
 \begin{equation}
 \label{eq_prelim_FX_1}
 e_{k,\beta}(z)=\sqrt{\frac{(k+|\beta|)!}{k! \beta!}}z_1^k z^\beta.
\end{equation}
Then $H_0 ={\textrm{H}}_1^2$, the Hardy space of the unit disc $\mD$. 
For the proof of Lemma~\ref{norm_multiplier}, we need to 
identify each $H_\beta$, $\beta \neq 0$, as a weighted Bergman space on the unit disc. 

Let $dA$ be the area measure on $\mD$ with the normalization $ A(\mD)=1$. 
For each integer $m\geq 0$, let 
$$
\textrm{B}^{(m)}:= \textrm{L}_{a}^2 \Big(\mD, (1-|\zeta|^2)^m  dA(\zeta)\Big),
$$
the usual weighted Bergman space of weight $m$. Then 
$$
\{e_k^{(m)}:k\in \mZ_+\}
$$
is the standard orthonormal basis for $\textrm{B}^{(m)}$, where 
\begin{equation}
 \label{eq_prelim_FX_2}
e_k^{(m)}(\zeta)=\sqrt{\frac{(k+m+1)!}{k!m!}}\zeta^k.
\end{equation}
For each $\beta \in \calB \setminus \{0\}$, define the unitary operator $W_\beta:H_\beta \rightarrow \textrm{B}^{(|\beta|-1)}$ by 
\begin{equation}
 \label{eq_prelim_FX_3}
W_\beta e_{k,\beta}=e_k^{(|\beta|-1)},\quad  k\in \mZ_+.
\end{equation}
It follows from \eqref{eq_prelim_FX_1} and \eqref{eq_prelim_FX_2} that the weighted shift $M_{z_1}|H_\beta$ 
is unitarily equivalent to $M_\zeta$ on $\textrm{B}^{(|\beta|-1)}$. Thus if $\beta \in \calB\setminus \{0\}$, then 
$$
W_\beta M_{z_1} h_\beta = M_\zeta W_\beta h_\beta\;\; \textrm{ for all } h \in H_\beta.
$$
Note that $M_{z_1}|H_0$ is the unilateral shift. 

We will also need the following fact.

\begin{lemma}
\label{lemma_Carleson}
  $|1-\zeta^2|^{-1/2} dA(\zeta)$ is a Carleson measure for the Hardy space ${\textrm{\em H}}_1^2$ of the unit disc $\mD$. 
\end{lemma}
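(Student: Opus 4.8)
The plan is to verify the geometric Carleson box condition and then invoke Carleson's embedding theorem, which asserts that a finite positive Borel measure $\mu$ on $\mD$ is a Carleson measure for $\textrm{H}_1^2$ precisely when there is a constant $C$ with $\mu(S(I))\le C\,|I|$ for every arc $I\subseteq \mT$; here $|I|$ denotes the normalized length of $I$ and $S(I)=\{r\zeta:\zeta\in I,\ 1-|I|\le r<1\}$ is the associated Carleson box. Accordingly I would set $d\mu(\zeta)=|1-\zeta^2|^{-1/2}\,dA(\zeta)$ and bound $\mu(S(I))$ above by a fixed multiple of $|I|$, uniformly in $I$.

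First I would record that the density $w(\zeta):=|1-\zeta^2|^{-1/2}$ is continuous and bounded on every compact subset of $\overline{\mD}\setminus\{1,-1\}$, its only singularities being at $\zeta=\pm1$, where $1-\zeta^2=(1-\zeta)(1+\zeta)$ vanishes to first order. Since the area measure already satisfies $A(S(I))\le |I|^2\le|I|$, any box staying a fixed distance away from $\{1,-1\}$ contributes at most a bounded multiple of $|I|$. Moreover $\mu(\mD)<\infty$, because near each singular point the first-order vanishing produces only a singularity of the form $\rho^{-1/2}$ in the distance $\rho$ to the point, which is integrable against $dA$; hence arcs with $|I|$ bounded below also cause no difficulty. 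Thus the problem reduces to arcs $I$ of small length clustered near one of the points $\pm1$, and by the symmetry $\zeta\mapsto-\zeta$, which preserves both $w$ and $dA$, it suffices to treat a neighborhood of $\zeta=1$.

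Near $\zeta=1$ the factor $|1+\zeta|$ is bounded below, so $w(\zeta)\le c_0\,|1-\zeta|^{-1/2}$. Writing $\zeta=re^{i\theta}$, I would use the elementary estimate $|1-\zeta|^2=(1-r)^2+2r(1-\cos\theta)\ge c_1\big((1-r)+|\theta|\big)^2$, valid once $r$ is close to $1$ and $|\theta|$ is small, to obtain the pointwise bound $w(\zeta)\le c_2\big((1-r)+|\theta|\big)^{-1/2}$. Substituting $u=1-r$ and integrating over a box $S(I)$ of side $h:=|I|$ whose arc is centered at an angle $\theta_0$ then reduces the matter to estimating
$$
\int_0^{h}\!\!\int_{\theta_0-h/2}^{\theta_0+h/2}\big(u+|\theta|\big)^{-1/2}\,d\theta\,du .
$$
Using the crude bound $\big(u+|\theta|\big)^{-1/2}\le u^{-1/2}$ in the angular variable gives an inner integral at most $h\,u^{-1/2}$, and the outer integration then yields a bound of order $h^{3/2}$. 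Because $h\le1$ forces $h^{3/2}\le h$, this produces $\mu(S(I))\le C\,|I|$ with $C$ independent of $I$, which completes the verification.

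The two one-dimensional integrations are entirely routine; the point deserving care — and the only genuine content of the lemma — is the reduction to the two singular points together with the observation that the exponent $\tfrac12$ is small enough to make the weight behave well. Indeed, the same scheme applied to a weight $|1-\zeta^2|^{-s}$ would give a box bound of order $h^{2-s}$, so the Carleson condition holds with room to spare precisely because $s=\tfrac12<1$; it is this mildness of the first-order singularity that guarantees both the finiteness of $\mu$ and the linear-in-$|I|$ estimate. Note also that the crude angular bound $\big(u+|\theta|\big)^{-1/2}\le u^{-1/2}$ holds irrespective of $\theta_0$, so the estimate is automatically uniform over the position of the box, whether its arc straddles the singular angle $\theta=0$ or merely lies near it.
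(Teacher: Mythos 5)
Your proof is correct: the box-condition verification, the pointwise bounds, and the final $h^{3/2}\le h$ estimate all hold, and they do establish the lemma. The route differs from the paper's mainly in organization. The paper never localizes: it applies the single global inequality $|1-\zeta^2|\ge 1-|\zeta|^2$ (the elementary $|1-w|\ge 1-|w|$ with $w=\zeta^2$), valid at \emph{every} point of $\mD$, to dominate the weight by the radial weight $(1-|\zeta|^2)^{-1/2}$; the integral of that radial weight over an arbitrary Carleson box is then computed exactly (substituting $u=1-r^2$), giving the bound $2\theta$ in one pass, with no case distinction according to the position or size of the box. Your argument reaches the same radial domination, but only locally: you first split into boxes far from $\pm 1$, boxes of length bounded below, and small boxes near a singular point, and in the last case you use $|1-\zeta|\gtrsim (1-r)+|\theta|\ge 1-r$ --- which is the local form of the paper's inequality --- so the core computation ($\int_0^h h\,u^{-1/2}\,du\sim h^{3/2}\le h$) is identical to the paper's radial integral. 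What your version buys is robustness and a clearer picture of \emph{why} the lemma holds: as you note, the scheme works for any weight $|1-\zeta^2|^{-s}$ with $s<1$, i.e.\ for any isolated boundary singularity of subcritical order, and the localization would survive weights whose singular set is not so globally structured. What the paper's version buys is brevity: because its key inequality is global, all boxes are treated uniformly and the case analysis (including the finiteness of $\mu(\mD)$ and the treatment of large arcs, which you need as separate steps) becomes unnecessary.
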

\begin{proof} For $z=e^{i\varphi}$, where $\varphi \in (-\pi,\pi]$, let 
 $$
S_\theta(z):=\{re^{it}: 1-\theta \leq r < 1, \; |t- \varphi|\leq  \theta \}.
$$ 
Then we have 
\begin{eqnarray*}
 \iint_{S_\theta(z)} |1-\zeta^2|^{-1/2} dA(\zeta) 
 &=& 
 \int_{\varphi-\theta}^{\varphi+\theta} \int_{1-\theta}^1 \frac{1}{|1-(re^{it})^2|^{1/2}} r dr dt \\
 &=& 
  \int_{\varphi-\theta}^{\varphi+\theta} \int_{1-\theta}^1 \frac{1}{\sqrt[4]{1-2r^2 \cos (2t)+r^4}} r dr dt \\
  &\leq& 
  \int_{\varphi-\theta}^{\varphi+\theta} \int_{1-\theta}^1 \frac{1}{\sqrt[4]{1-2r^2 +r^4}} r dr dt \\
& =&\int_{\varphi-\theta}^{\varphi+\theta} \int_{1-\theta}^1 \frac{1}{\sqrt{1-r^2 }} r dr dt\\
 &=& 
   \int_{\varphi-\theta}^{\varphi+\theta} \int_0^{1-(1-\theta)^2} \frac{1}{2\sqrt{u}} du  dt \textrm{ (with }u=1-r^2\textrm{)}
   \\
&=& 
   \int_{\varphi-\theta}^{\varphi+\theta} \sqrt{u} \Big|_{0}^{1-(1-\theta)^2} dt 
\\
&=&\int_{\varphi-\theta}^{\varphi+\theta}\sqrt{1-(1-\theta)^2} dt
 \\
  &\leq & 
  \int_{\varphi-\theta}^{\varphi+\theta} 1 dt = 2\theta.
\end{eqnarray*}
This completes the proof. 
\end{proof}

We are now ready to prove Lemma~\ref{norm_multiplier}. 

\begin{proof}[Proof of Lemma~\ref{norm_multiplier}] It is enough to consider the case when $\alpha=1$. 
Let $h_\beta \in H_\beta$, where 
$\beta=(0,\beta_2,\cdots, \beta_n)$. Then 
$$
h_\beta(z)= \sum_{k=0}^\infty c_k z_1^k z^\beta.
$$
First we assume that $\beta\neq 0$. By \eqref{eq_prelim_FX_3}, 
$$
(W_\beta h_\beta)(\zeta)=\sqrt{\frac{\beta !}{(|\beta|-1)!}}\sum_{k=0}^\infty c_k \zeta^k , \quad \zeta \in \mD.
$$
Then $W_\beta h_\beta \in {\textrm{B}}^{(|\beta|-1)}$. Denote $e_2=(0,1,\cdots,0)$. Since $z_2 z^\beta =z^{\beta+e_2}$, we have 
$$
(W_{\beta+e_2} z_2 h_\beta)(\zeta)= \sqrt{\frac{(\beta+e_2)!}{|\beta|!}}\sum_{k=0}^\infty c_k \zeta^k , \quad \zeta \in \mD,
$$
and $W_{\beta+e_2} z_2 h_\beta \in {\textrm{B}}^{|\beta|}$. Now suppose that 
$$
h_\beta(z)=(1-z_1^2)^{-1/4} f_\beta (z),
$$
where 
$$
f_\beta(z)=\sum_{k=0}^\infty a_k z_1^k z^\beta.
$$
For $\zeta \in \mD$, we have $|1-\zeta^2|\geq 1-|\zeta|^2$, and so 
\begin{equation}
\label{eq_norm_multiplier_a}
|1-\zeta^2|^{1/2}\geq (1-|\zeta|^2)^{1/2}\geq 1-|\zeta|^2.
\end{equation}
We have 
\begin{eqnarray*}
 \|z_2 &&\!\!\!\!\!\!\!\!\!\!\!\!\!\!\!(1-z_1^2)^{-1/4} f_\beta\|_{{\textrm{H}}_{n}^2}^2\phantom{\int_{\mD} }
 \\
 &=&
 \|z_2 h_\beta\|_{{\textrm{H}}_{n}^2}^2=\|W_{\beta+e_2} \zeta_2 h_\beta\|^2_{{\textrm{B}}^{(|\beta|)}}\phantom{\int_{\mD}\sum_{k=0}^\infty }
 \\
 &=& \frac{(\beta+e_2)!}{|\beta|!} \int_{\mD} \Big| \sum_{k=0}^\infty c_k \zeta^k\Big|^2 (1-|\zeta|^2)^{|\beta|} dA(\zeta)\\
 &=& \frac{(\beta+e_2)!}{|\beta|!} \int_{\mD} \Big|\frac{1}{(1-\zeta^2)^{1/4}} \sum_{k=0}^\infty a_k \zeta^k\Big|^2 (1-|\zeta|^2)^{|\beta|} dA(\zeta)\\
&=& \frac{(\beta+e_2)!}{|\beta|!} \int_{\mD} \Big|\sum_{k=0}^\infty a_k \zeta^k\Big|^2 
\frac{(1-|\zeta|^2)^{|\beta|}}{|1-\zeta^2|^{1/2}} dA(\zeta)\\
&\leq & 
\frac{(\beta+e_2)!}{|\beta|!} \int_{\mD} \Big|\sum_{k=0}^\infty a_k \zeta^k\Big|^2 
(1-|\zeta|^2)^{|\beta|-1} dA(\zeta) \textrm{ (using  \eqref{eq_norm_multiplier_a})} \\
 &= &\frac{\beta_2+1}{|\beta|}\frac{\beta!}{(|\beta|-1)!} \int_{\mD} \Big|\sum_{k=0}^\infty a_k \zeta^k\Big|^2 
(1-|\zeta|^2)^{|\beta|-1} dA(\zeta)\\
&=& \frac{\beta_2+1}{|\beta|} \|W_\beta f_\beta\|^2_{{\textrm{B}}^{(|\beta|-1)}}=
\frac{\beta_2+1}{|\beta|}\|f_\beta\|_{{\textrm{H}}_{n}^2}^2 \leq 2\|f_\beta\|_{{\textrm{H}}_{n}^2}^2. \phantom{\int_{\mD}\sum_{k=0}^\infty }
\end{eqnarray*}
So we have shown that for $\beta \neq 0$, the norm of the restriction of the operator of 
multiplication by $z_2 (1-z_1^2)^{-1/4}$ to $H_\beta$ does not exceed $\sqrt{2}$. 

Next we consider the case when $\beta=0$. We know that $H_0={\textrm{H}}_1^2$, the Hardy space on $\mD$. Let 
$h\in H_0$. Then 
$$
h(z)= \sum_{k=0}^\infty c_k z_1^k.
$$
we have 
$$
(W_{e_2} z_2 h)(\zeta)=\sum_{k=0}^\infty c_k \zeta^k, \quad \zeta \in \mD
$$
and $W_{e_2} z_2 h$ belongs to the Bergman space ${\textrm{B}}^{(0)}$. Now suppose 
$$
h(z)=(1-z_1^2)^{-1/4} f(z),
$$
for some 
$$
f(z)= \sum_{k=0}^\infty a_k z_1^k.
$$
Then 
\begin{eqnarray*}
 \|z_2 (1-z_1^2)^{-1/4} f\|_{{\textrm{H}}_{n}^2}^2 
 &=& \|W_{e_2} z_2 h\|_{{\textrm{B}}^{(0)}}^2 \phantom{\displaystyle \int_\mD \Big| \sum_{k=0}^\infty}\\
 &=&\int_\mD \Big| \sum_{k=0}^\infty c_k \zeta^k\Big|^2 dA(\zeta)\\
 &=& \int_\mD \Big| \frac{1}{(1-\zeta^2)^{1/4}} \sum_{k=0}^\infty a_k \zeta^k \Big|^2 dA(\zeta)\\
 &=& \int_{\mD} \Big| \sum_{k=0}^\infty a_k \zeta^k \Big|^2 |1-\zeta^2|^{-1/2} dA(\zeta)\\
 &\leq& C\|f\|_{{\textrm{H}}_1^2}^2,\phantom{ \sum_{k=0}^\infty \int_\mD\frac{1}{1-\zeta^2)^{1/4}}}
\end{eqnarray*}
where the last inequality follows from the fact that $|1-\zeta^2|^{-1/2} dA(\zeta)$ is a Carleson measure for ${\textrm{H}}_1^2$ 
(Lemma~\ref{lemma_Carleson} above). 
 So we have shown that the norm of the restriction of the operator of multiplication by $z_2(1-z_1^2)^{-1/4}$ to $H_0$ 
 does not exceed $\sqrt{C}$. 
 
 If $\beta \neq \beta'$, $f_\beta \in H_\beta$, and $f_{\beta'} \in H_{\beta'}$, then 
 $$
 \frac{z_2}{(1-z_1^2)^{1/4}} f_\beta \;\;\perp\;\; \frac{z_2}{(1-z_1^2)^{1/4}} f_{\beta'}.
 $$
 Thus it follows from the two paragraphs above that the multiplication operator $M_{G_\alpha}$ 
 corresponding to 
 $$
 G_\alpha =\frac{z_2}{(1-z_1^2)^{1/4}}
 $$
 is a continuous linear map on ${\textrm{H}}_{n}^2$, that is, $G_\alpha \in \calM({\textrm{H}}_{n}^2)$. This completes the proof.  
\end{proof}

\section{Noncoherence of $\sH$}

\begin{proof}[Proof of Theorem~\ref{main_theorem}] 
We will prove the claim by contradiction. Suppose that $\sH$ is a 
coherent ring. Let $f=(f_1,f_2) \in (\sH)^2$, where 
$f_1:= z_1$ and $ f_2:= z_2$. As $\sH$ is coherent, $f^\perp$ 
will be finitely generated, say by $h_1,\cdots, h_k$ in $(\sH)^2$.  
For $\alpha \in \mT$, define $g_\alpha=(g_{1,\alpha}, g_{2,\alpha})$ by  
\begin{eqnarray*}
g_{1,\alpha}(z)
&:=&
\displaystyle \frac{z_2}{( 1- \alpha z_3^2)^{1/4}},\quad 
\\
g_{2,\alpha}(z)&:=&
\displaystyle 
\frac{-z_1}{( 1- \alpha z_3^2)^{1/4}},
\end{eqnarray*}
for $z=(z_1,\cdots, z_n)\in \mB_n$. Note that by Lemma~\ref{norm_multiplier}, 
we know that $g_{\alpha} $ is in $\sH$ for each $\alpha \in \mT$. 

The rest of the proof is the same, mutatis mutandis, as the proof given in \cite[Section~1, pages 69-71]{Ama}. 
We repeat it here making sure that the implicit but straightforward changes needed 
in that proof to adapt it to our different situation, are made explicit here for the 
convenience of the reader. 

Moreover,  
$$
f_1 g_{\alpha,1}+ f_2 g_{\alpha,2}= z_1 \cdot \frac{z_2}{( 1- \alpha z_3^2)^{1/4}}
+z_2\cdot \frac{-z_1}{( 1- \alpha z_3^2)^{1/4}}=0,
$$ 
and so $g_\alpha=(g_{1,\alpha}, g_{2,\alpha}) \in f^\perp$. Thus there exist 
$\gamma_{\alpha,i} \in \sH$ such that 
\begin{equation}\label{eq_main_theorem_1}
g_\alpha = \sum_{i=1}^k \gamma_{\alpha,i} h_i.
\end{equation}
If $h_i=:(r_i,s_i)\in (\sH)^2$, then we have 
 $$
z_1 r_i +z_2 s_i =0.
$$ 
So if $z_2=0$, then $ z_1 r_i=0$. Thus $r_i=0$ on 
$$
\{z =(z_1,\cdots, z_n)\in \mB_n:z_2=0\}.
$$
Hence there exist $t_i$, holomorphic in $\mB_n$ such that 
 $$
r_i(z)= z_2  t_i(z),\quad  i=1,\cdots, k, \quad z\in \mB_n.
$$ 
So it now follows from \eqref{eq_main_theorem_1} that 
$$
\frac{z_2}{( 1- \alpha z_3^2 )^{1/4}}
 =
  \sum_{i=1}^k \gamma_{\alpha,i}(z) z_2 t_i(z),
 $$
 that is, 
$$
\varepsilon_\alpha (z):= \frac{1}{( 1- \alpha z_3^2 )^{1/4}}
=
\sum_{i=1}^k \gamma_{\alpha,i}(z) t_i(z),\quad \alpha \in \mT,\; z\in \mB_n.
$$
Let $\alpha_1,\cdots, \alpha_k,\alpha_*$ be $k+1$ distinct points on $\mT$. We interpret 
\begin{equation}
\label{eq_Amar_1}
 \varepsilon_\alpha (z)=\sum_{i=1}^k \gamma_{\alpha,i}(z) t_i(z)
\end{equation}
for these $k+1$ choices of $\alpha$ as a system of $k+1$ linear equations in $k$ unknowns, the $t_i(z)$'s:
\begin{equation}
\label{eq_Amar_2}
 \left[\begin{array}{cccc}
 \gamma_{\alpha_1 ,1} & \cdots & \gamma_{\alpha_1, k} &  \varepsilon_{\alpha_1}\\
 \vdots &&\vdots&\vdots \\
 \gamma_{\alpha_k ,1} & \cdots & \gamma_{\alpha_k ,k} &  \varepsilon_{\alpha_k}\\
 \gamma_{\alpha_* , 1} & \cdots & \gamma_{\alpha_* , k} &  \varepsilon_{\alpha_*}
 \end{array}\right]
\underbrace{\left[\begin{array}{cccc}
t_1 \\
\vdots \\
t_k \\
-1
\end{array}\right]}_{\neq 0}
=
0.
\end{equation}
Since \eqref{eq_Amar_2} is solvable, we must have 
$$
\det \left[\begin{array}{cccc}
 \gamma_{\alpha_1 ,1} & \cdots & \gamma_{\alpha_1 ,k} &  \varepsilon_{\alpha_1}\\
 \vdots &&\vdots&\vdots\\
 \gamma_{\alpha_k , 1} & \cdots & \gamma_{\alpha_k ,k} &  \varepsilon_{\alpha_k}\\
 \gamma_{\alpha_* , 1} & \cdots & \gamma_{\alpha_* , k} &  \varepsilon_{\alpha_*}
 \end{array}\right]=0.
$$
Expanding the determinant along the last column gives 
\begin{equation}
\label{eq_Amar_3}
\underbrace{\det \left[\begin{array}{ccc}
\gamma_{\alpha_1 ,1} & \cdots & \gamma_{\alpha_1, k}\\
\vdots && \vdots\\
\gamma_{\alpha_k ,1} & \cdots & \gamma_{\alpha_k ,k}
\end{array}\right]}_{=:\Delta}
\cdot\; \varepsilon_{\alpha_*}
=
\sum_{i=1}^k \Lambda_{\alpha_* , i} \cdot \varepsilon_{\alpha_i},
\end{equation}
with $\Lambda_{\alpha_* ,i}\in \sH\subset {\textrm{H}}^\infty (\mB_n)$ (since the $\gamma_{\alpha_j ,i}\in \sH$). 
Now we consider the following two possible cases separately:
\begin{itemize}
 \item[$\underline{1}^\circ$] The determinant $\Delta$ is not identically $0$ on the variety

  \noindent $\calV:=\{z=(z_1,\cdots, z_n)\in \mB_n: z_1=z_2=0\}$.
 \item[$\underline{2}^\circ$] $\Delta\equiv 0$ on $\calV$. 
\end{itemize}
Let us consider case $1^\circ$ first. The map $z_3\mapsto \Delta|_{\calV}(0,0,z_3):\mD \rightarrow \mC$ 
is holomorphic and bounded, independent of the $\alpha_*$. As $\Delta|_{\calV}$ is not identically 
zero, there exists a point $\alpha_*\in \mT$, which is distinct from $\alpha_1,\cdots,\alpha_k$, 
such the radial limit of $\Delta|_{\calV}(0,0,\cdot)$ is nonzero 
as $z_3\rightarrow {\overline{\alpha_*}}^{1/2}$. Then  $z_3^2$ approaches $\overline{\alpha_*}$, 
and we see in \eqref{eq_Amar_3} that the left hand side approaches $\infty$, 
while it is not the case that the right hand side approaches $\infty$ 
(because the $\Lambda^i_{\alpha_*}$ and the $\varepsilon_{\alpha_j}$, 
with $\alpha_j\neq\alpha_*$, stay bounded). This contradiction shows that this case can't 
be possible. 

So we now consider case $2^\circ$. Suppose that $\Delta=0$ on $\calV$ 
for every choice of $\alpha_1,\cdots,\alpha_k$ in $\mT$. Let $\ell$ be the rank 
$$
\ell:=
\textrm{rank}_{\calV}\left[\begin{array}{ccc}
\gamma_{\alpha_1, 1} & \cdots & \gamma_{\alpha_1, k}\\
\vdots && \vdots\\
\gamma_{\alpha_k ,1} & \cdots & \gamma_{\alpha_k, k}
\end{array}\right]
:= \max_{z\in \calV} \textrm{rank}\left[\begin{array}{ccc}
\gamma_{\alpha_1, 1}(z) & \cdots & \gamma_{\alpha_1, k}(z)\\
\vdots && \vdots\\
\gamma_{\alpha_k, 1}(z) & \cdots & \gamma_{\alpha_k, k}(z)
\end{array}\right].
$$
Thus $\ell<k$ owing to the fact that $\Delta=0$ on $\calV$. 
After a  rearrangement (if necessary) of the $\alpha_i$, we arrive at 
$$
\det 
\left[\begin{array}{ccc}
\gamma_{\alpha_1, 1} & \cdots & \gamma_{\alpha_1, \ell}\\
\vdots && \vdots\\
\gamma_{\alpha_\ell, 1} & \cdots & \gamma_{\alpha_\ell, \ell}
\end{array}\right]
\not\equiv 0 \textrm{ on } \calV.
$$
From \eqref{eq_Amar_1}, we can deduce that $\ell$ can't be zero. 
Indeed, otherwise all the $\gamma_{\alpha_j, i}\equiv 0$ on 
$\calV$ and by \eqref{eq_Amar_1}, we would have $1/(1-\alpha z^2)^{1/4}=0$, $z\in \mD$, 
which is clearly impossible. So we have that $\ell\geq 1$, and from the definition of the rank 
it follows that 
$$
D_{ij}= \det \left[\begin{array}{cccc}
\gamma_{\alpha_1, 1} & \cdots & \gamma_{\alpha_1, \ell} & \gamma_{\alpha_1, i}\\
\vdots && \vdots& \vdots \\
\gamma_{\alpha_\ell, 1} & \cdots & \gamma_{\alpha_\ell, \ell} & \gamma_{\alpha_\ell, i}\\
\gamma_{\alpha_j, 1} & \cdots & \gamma_{\alpha_j, \ell} & \gamma_{\alpha_j, i}
\end{array}\right]
\equiv 0 
\textrm{ on }\calV \textrm{ for all }i,j \textrm{ in }\{1,\cdots,k\}.
$$
We have 
\begin{eqnarray*}
\det &&\!\!\!\!\!\!\!\!\!\!\!\!\!\!\left[\begin{array}{cccc}
\gamma_{\alpha_1, 1} & \cdots & \gamma_{\alpha_1, \ell} & \varepsilon_{\alpha_1}\\
\vdots && \vdots& \vdots \\
\gamma_{\alpha_\ell, 1} & \cdots & \gamma_{\alpha_\ell, \ell} & \varepsilon_{\alpha_\ell}\\
\gamma_{\alpha_j, 1} & \cdots & \gamma_{\alpha_j, \ell} & \varepsilon_{\alpha_j}
\end{array}\right]
\\
&=&
\det \left[\begin{array}{c|c}\!\!\!
   \begin{array}{ccc}
   \gamma_{\alpha_1, 1} & \cdots\!\!\!\!\!\! & \gamma_{\alpha_1, \ell} \\
   \vdots & & \vdots \\
   \gamma_{\alpha_\ell, 1} & \cdots\!\!\!\!\!\! & \;\;\; \;\;\gamma_{\alpha_\ell, \ell}\phantom{\displaystyle \sum}
   \end{array} \!\!\!\!\!\!\!\!\!
   & 
   t_1 \left[\!\!\! \begin{array}{ccc} \gamma_{\alpha_1, 1}\\ \vdots \\ \gamma_{\alpha_\ell,1}\end{array}\!\!\!\right]
   + \cdots +
   t_k \left[\!\!\!\begin{array}{ccc} \gamma_{\alpha_1,k}\\ \vdots \\ \gamma_{\alpha_\ell, k}\end{array}\!\!\!\right]
   \\ \hline 
   \begin{array}{ccc} \gamma_{\alpha_j, 1}\;\; & \!\!\!\!\!\cdots & \gamma_{\alpha_j, \ell} \end{array} 
   & 
   t_1 \gamma_{\alpha_j, 1}+ \cdots + t_k \alpha_{\alpha_j, k}
\end{array}\right]
\\
&=&\sum_{i=1}^k t_i D_{ij}\equiv 0\textrm{ on }\calV\textrm{ for all }j\in \{1,\cdots,k\}. 
\end{eqnarray*}
By expanding the determinant on the left hand side along the last column, we obtain
$$
\underbrace{\det \left[\begin{array}{ccc}
\gamma_{\alpha_1, 1} & \cdots & \gamma_{\alpha_1, \ell}\\
\vdots && \vdots\\
\gamma_{\alpha_\ell, 1} & \cdots & \gamma_{\alpha_\ell, \ell}
\end{array}\right]}_{=:\delta}
\cdot \;\varepsilon_{\alpha_j}
=
\sum_{i=1}^\ell \lambda_{\alpha_j, i} \cdot \varepsilon_{\alpha_i}\textrm{ on }\calV,
$$
with $\lambda_{\alpha_* , i}\in \sH \subset {\textrm{H}}^\infty(\mB_n)$. If it is not the case that 
$\delta\equiv 0$ on $\calV$, then we repeat the argument in $1^\circ$ (replacing $\alpha_*$ by $\alpha_j$), 
and arrive at a contradiction. So we conclude that $\delta\equiv 0$ on $\calV$, but this contradicts the definition of the 
 $\ell$. Hence case $2^\circ$ is impossible too. 

 Consequently, $f^\perp$ is not finitely generated, and so $\sH$ is not coherent. 
\end{proof}

\smallskip 

\noindent {\bf Acknowledgement:}  I would like to thank
Professor Jingbo Xia (State University of New York at Buffalo)
for showing me an outline of the proof of Lemma~\ref{norm_multiplier} 
and for several useful discussions relating to it.

\end{document}